%
%
%%%%%%%%%%%%%%%%%%%%%%
 \documentclass[a4paper,11pt,twoside,reqno]{amsart} 
\DeclareSymbolFontAlphabet{\mathcal}{symbols}
\usepackage[colorlinks=true,linkcolor=blue]{hyperref}
%\usepackage[colorlinks=red,linkcolor=blue]{hyperref}

%def=[(i)] et proposition=[\rm (a)]
%%%%%%%%%%%%%%%%%%%%%%%%%%%%%%%%%%%%%%%%%%%%%%%%%%%%%%%
%%%%%Figure Geogebra
%%%%%%%%%%%%%%%%%%%%%%%%%%%%%%%%%%%%%%%%%%%%%%%%%%%
%% Disposition et dimension des pages %%%%%%%%%%%%%%%%%%
%\baselineskip=20pt
%\renewcommand{\baselinestretch}{.3}
% Ce package fait explicitement apparaître les références.
%\usepackage[notref,color]{showkeys}
%%%%%%%%%%%%%%%%%%%
%%%%%%%%%%%%%%%%%%%%%%%%%%%%%%%
%\usepackage{pdfsync}
\usepackage[cmtip,all]{xy}
%%%%%%%%%%%%%%
\usepackage{hyperref}
%%%%%%%%%%%%%%%%%%
\usepackage{amsmath}
\usepackage[psamsfonts]{amssymb}
\usepackage[psamsfonts]{eucal}
\usepackage{amsthm}
\usepackage[psamsfonts]{amssymb}
\usepackage[scaled]{helvet}
%mathscript
\usepackage{mathrsfs}
\usepackage{framed}
%Pour avoir une version bold des symboles
\usepackage{bm}
\usepackage[a4paper]{geometry}
%\geometry{hmargin=1.5cm,vmargin=2.2cm}
%%%%%%%%%%%%%%%%%%%%%%%%%%%%%
\usepackage{enumerate}
\usepackage{graphicx}
\usepackage{tikz}
%%%%%%%%%%%%%%%%%%%%%%%%%%%
\usepackage{color}
\definecolor{trama}{gray}{.875}
%%%%%%%%%%%%%%%%%%
\usepackage{lmodern}
\usepackage{textcomp}% symboles supplémentaires
\usepackage{enumerate}
\usepackage{wrapfig}
%%%%%%%%%%%%%%
%%%%%%%%%%%%%%%%%%%%%%%%%%%
%
%  Start of metadata  
%
%%%%%%%%%%%%%%%%%%%%%%%%%%%
    \title{Symmetric Lie models of a Triangle}

\author{Urtzi Buijs}
\address{Departamento de \'Algebra, Geometr\'ia y Topolog\'ia\\
         Universidad de M\'alaga\\
        Ap. 59\\
         29080-M\'alaga\\
         Espa\~na}
\email{ubuijs@uma.es}

\author{Yves F\'elix}
\address{Institut de Math\'ematiques et Physique\\
         Universit\'e Catholique de Louvain-la-Neuve\\
         Louvain-la-Neuve\\
         Belgique}
\email{Yves.felix@uclouvain.be}

\author{Aniceto Murillo}
\address{Departamento de \'Algebra, Geometr\'ia y Topolog\'ia\\
         Universidad de M\'alaga\\
        Ap. 59\\
         29080-M\'alaga\\
         Espa\~na}
         \email{aniceto@uma.es}

\author{Daniel Tanr\'e}
\address{D\'epartement de Math\'ematiques, UMR 8524\\
         Universit\'e de Lille~1\\
         59655 Villeneuve d'Ascq Cedex\\
         France}
\email{Daniel.Tanre@univ-lille1.fr}

\date{\today}

\thanks{
The first and third authors have been partially supported by the Junta de Andaluc\'\i a grant FQM-213.
The fourth author has been partially supported by the  ANR-11-LABX-0007-01  ``CEMPI''.
The  authors have been partially supported by the MINECO grants MTM2013-41768-P and MTM2016-78647-P} 
%%%%%
\subjclass[2010]{55P62, 17B01, 55U10}

\keywords{Rational homotopy theory. Lie models of simplicial sets. Symmetric models.}

% \arxivreference{}
% \arxivpassword{}
%%%%%%%%%%%%%%%%%%%%%%
%
%End of metadata
%
%%%%%%%%%%%%%%%%%%%%
%%%%%%%%%%%%%%%%%%%%
%
%Start of user-defined macros
%
%%%%%%%%%%%%%%%%%%%%%%%
%%%%%%%%%%%%%%%%%%%
%To indent subsections in table of contents
%\makeatletter
%\renewcommand\l@subsection{\@tocline{2}{0pt}{2pc}{5pc}{}}
%\renewcommand\l@subsubsection{\@tocline{3}{0pt}{4pc}{10pc}{}}
%\makeatother
%%%%%%%%%%%%%%%%%
\theoremstyle{plain}

\newtheorem{proposition}{Proposition}[section]
\newtheorem{theoremb}[proposition]{Theorem}
\newtheorem{lemma}[proposition]{Lemma}

\theoremstyle{definition}
\newtheorem{definition}[proposition]{Definition}

\theoremstyle{remark}

%%%%%%%%%%%%%%%%%%%%%%%%%%%%%%%%%

\newcommand{\secref}[1]{Section~\ref{#1}}

\newcommand{\thmref}[1]{Theorem~\ref{#1}}
\newcommand{\propref}[1]{Proposition~\ref{#1}}
\newcommand{\lemref}[1]{Lemma~\ref{#1}}

%\newcommand{\probref}[1]{Problem~\ref{#1}}

%%%%%%%%%%%%%%%%%%%%%%%%%%%%%%%%
%%%%%%%%%%%%%%%%%%%%%%%%%%%%%
%NOTAS AL MARGEN
%\newcommand{\margen}[1]{%
%\marginpar{\color{blue}\fbox{\parbox{0.9\linewidth}{ \tiny  #1}} 
%}
%}
%%%%%%%%%%%%%%%%%%%%%

%%%%%%%%%%%%%%%%%%%%%%%%%
 %%%%%%%%%%%%%%%%%%%%%%
 %%%%%%%%%%%%%%%%%%%%%

\def\cG{{\mathcal G}}

%%%%%%%%%%%%%%%%%%%%%%%%

%%%%%%%%%%%%

%%%%%%%%%%%%

\def\1{{\mathbf 1}}
%%%%%%%%%%%%%

\def\te{{\mathtt e}}

\def\tv{{\mathtt v}}
\def\tw{{\mathtt w}}
\def\tx{{\mathtt x}}
%%%%%%%%%%%

\def\L{\mathbb{L}}

\def\Q{\mathbb{Q}}

\def\Z{\mathbb{Z}}

%%%%%%%%%%%%%%%%%
\def\Ll{\widehat{\mathbb L}}
 
 \newcommand{\lasu}{{\mathfrak{L}}} 
 \def\ad{{\rm ad}}
 
 \def\MC{{\rm MC}}
%%%%%%%%%%%%%%%%%%%%%%%%%%%%%%
%%%%% The text
%%%%%%%%%%%%%%%%%%%%%%%%%%%%%%%
 \begin{document}

\begin{abstract}
R. Lawrence and D. Sullivan have constructed a Lie model for an interval from the geometrical
idea of flat connections and flows of gauge transformations. 
Their model supports an action of the symmetric group $\Sigma_2$ reflecting 
the geometrical symmetry  of the interval.
In this work, we present a Lie model of the triangle with an action of the symmetric group 
$\Sigma_3$ compatible with the  geometrical symmetries of the triangle.
We  also prove  that the model of a graph consisting of a circuit with $k$ vertices  
admits a Maurer-Cartan element stable by the automorphisms of the graph.
\end{abstract}

\maketitle

%%%%%%%%%%%%%%%%
\section*{Introduction}

In rational homotopy theory, the existence of algebraic models of topological spaces
 in terms of Lie algebras was first established by D. Quillen in \cite{Q2} for 
 simply connected spaces. His work is based on a succession of couples of adjoint functors
 between the category of simply connected spaces and the category of differential
  Lie algebras, positively graded.
  The complexity of the Quillen functors contrasts with the simple couple of functors 
  $(A_{PL},\langle -\rangle)$
  introduced by D. Sullivan (\cite{Dennis}) between the category of simplicial sets and 
  the category of
  commutative differential graded algebras (cdga in short).
Sullivan's construction 
is based on a contractible simplicial cdga $\Omega_{\bullet}$ defined by
$$\Omega_{n}=\Q[t_{0},\dots,t_{n}]\otimes \land(dt_{0},\dots,dt_{n})/(\sum t_{i}-1, \sum dt_{i}),$$
with the $t_{i}$'s in degree zero and the $dt_{i}$'s in degree one.
The cdga $A_{PL}(X)$ associated to a simplicial set $X$ is defined from the set of simplicial maps
from $\Omega_{\bullet}$ to $X$.

\medskip
A natural question is the existence of a similar construction in the context of Lie algebras. 
We solve it in \cite{four} from the  cocommutative, coassociative, 
infinity coalgebra structure on the rational singular chains and a transfer process.
For a finite simplicial set, $K$, 
this gives a complete differential graded Lie algebra $\lasu(K)$
which contains information on the rational homotopy type of $K$, 
as the Bousfield-Kan completion of its fundamental group for instance.
The departure point is the construction of a family
$\lasu_k=\lasu(\Delta^k)$ which extends the construction by R. Lawrence and D. Sullivan
of a complete differential Lie algebra $\lasu_1$ in the case of the interval, see \cite{LS}.
We recall it in the first section.
Their model is symmetric for the action of the symmetric group $\Sigma_2$
deduced from the geometry of the interval.

\medskip
In \secref{sec:triangle}, we recall the construction of
a Lie model $\lasu_2$ of the triangle $\Delta^2$.
Based on a vertex of $\Delta^2$, this model does not reflect the symmetries of
a triangle. 
We modify it in a Lie model
with an action of the symmetric group 
$\Sigma_3$ compatible with the different symmetries.
For that, we consider the geometrical action of $\Sigma_3$ on the generators of $\lasu_2$
and prove the existence of a Maurer-Cartan element 
stable by any element of $\Sigma_3$. 
Even though the differential in $\lasu_2$ is, up to some extend, uniquely determined 
(\propref{prop:unicitylasu2}), there are many Maurer-Cartan elements invariant by $\Sigma_3$.  

\medskip
The Maurer-Cartan elements play the role of points 
and   are recalled in \secref{sec:back}. Therefore, a Maurer-Cartan element invariant by the action 
of $\Sigma_{3}$ is an algebraic representation of the barycentre of the triangle.
After the interval of Lawrence and Sullivan, this is the second stage in the research of a Lie $\Sigma_{n}$-model
of a simplex $\Delta^n$.

\medskip
Our first step consists to prove that  the Lie model $\lasu(\Gamma)$,
 of a graph $\Gamma$ consisting of a circuit with $k$ vertices,
  contains a Maurer-Cartan element invariant by the automorphisms of the graph.  

\medskip
In all this work,  vector spaces and chain complexes are $\Z$-graded and we denote by $|x|$ the degree
of an element. 
The free Lie algebra on a rational vector space $V$
is represented  by $\L(V)$. The vector subspace $\L^i(V)$ is the set of elements
of $\L(V)$ of  bracket length $i$.
The objects considered in this work are complete free Lie algebras,  defined by
$$\Ll(V)=\varprojlim_i \L(V)/\L^{\geq i}(V).$$
Endowed with a Lie differential, they are called
complete differential graded  Lie algebras
(henceforth cdgl) and, if
the explicit elicitation of the vector space $V$ is not necessary, we denote also $L=\Ll(V)$. 
If $x\in L$, the map $\ad_x\colon L\to L$ is the adjoint derivation $y\mapsto [x,y]$.

%%%%%%%%%%%%%%%%%%%
\section{Background}\label{sec:back}

The \emph{Baker-Campbell-Hausdorff product} (henceforth BCH-product)
of two elements $x$ and $y$ of degree~0 in a free complete Lie algebra $\Ll(V)$ is defined by
$$x\ast y=\log(e^xe^y).$$ 
This product satisfies the following properties (see
\cite{four}, \cite{Gad} or \cite{LS} for instance) for any $x$, $y$, $z$ in $L_{0}$
and $r$, $s$ in $\Q$:
\begin{enumerate}[(i)]
\item $x\ast(y\ast z)= (x\ast y)\ast z$,
\item $(rx)\ast(sx)=(r+s)x$. In particular, the inverse of $x$ is $-x$; i.e., $(x)\ast(-x)=0$,
\item $-(y\ast x)=(-x)\ast(-y)$,
\item $\ad_{x\ast y}=\ad_x \ast \ad_y$
 and $e^{\ad_{x\ast y}}=e^{\ad_x}\circ e^{\ad_y}$,
 \item $e^{\ad_x}(y)=x\ast y\ast (-x)$ and $e^{\ad_x}(y)\ast e^{\ad_x}(z)=e^{\ad_x}(y\ast z)$,
 \item $r(x\ast y)\ast x=x\ast r(y\ast x)$ and
 $r(y\ast x)\ast (-x)=(-x)\ast r(x\ast y)$.
\end{enumerate}
As the BCH-product is associative, we can introduce the following notation, for any family
$(w_i)_{1\leq i\leq r}$ of elements in $L_0$,
$$\ast_{i=1}^rw_i=w_1\ast w_2\ast\dots\ast w_r.$$

\medskip\noindent
The next property follows  from the Dynkin formula for the BCH-product,
see \cite{Serre}.

\begin{lemma}\label{lem:bch}
Let  
$(w_i)_{1\leq i\leq r}$ be elements of degree zero in $\Ll(V)$ that coincide in 
the quotient $\Ll(V)/\Ll^{>n}(V)$.
Then, for any rational numbers $\lambda_{i}$, 
we have an equality between the classes modulo $\Ll^{>n+1}(V)$,
$$[\ast_{i=1}^r \lambda_i w_i]^{<n+1} =  [\sum_{i=1}^r \lambda_i  \, w_{i}]^{<n+1}\,.$$
\end{lemma}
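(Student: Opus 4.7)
The plan is to reduce the claim to a length-count on the nonlinear part of the (iterated) Dynkin expansion of the BCH product. First, fix a representative $w\in \Ll(V)_{0}$ of the common class of the $w_{i}$'s, so that $w_{i}=w+u_{i}$ with $u_{i}\in \Ll^{>n}(V)$. By iterating the Dynkin formula for $x\ast y$, one writes
$$\ast_{i=1}^{r}\lambda_{i}w_{i} \;=\; \sum_{i=1}^{r}\lambda_{i}w_{i} \;+\; \sum_{\alpha}c_{\alpha}\,B_{\alpha},$$
where each $c_{\alpha}\in\Q$ and each $B_{\alpha}$ is a right-normed iterated Lie bracket of bracket length $m_{\alpha}\geq 2$ whose letters are chosen, with possible repetitions, from $\{\lambda_{1}w_{1},\dots,\lambda_{r}w_{r}\}$. (The ``linear'' part of the expansion really is $\sum_i \lambda_i w_i$ because the pure one-letter contributions $[x,[x,\dots,x]]$ in the Dynkin formula vanish for length $\geq 2$.)

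It then suffices to show that each $B_{\alpha}$ lies in $\Ll^{>n+1}(V)$. Substituting $w_{i}=w+u_{i}$ into a given $B_{\alpha}$ and expanding multilinearly produces a sum of right-normed brackets of length $m_{\alpha}$ in the letters $\{w,u_{1},\dots,u_{r}\}$. In the summand whose letters are all equal to $w$, the innermost bracket is $[w,w]$, which is zero because $w$ has degree zero; hence this term contributes nothing. In every remaining summand, at least one letter is some $u_{j}\in\Ll^{>n}(V)$, so the total bracket length is at least $(m_{\alpha}-1)\cdot 1+(n+1)=m_{\alpha}+n\geq n+2$, and the term belongs to $\Ll^{>n+1}(V)$, as required.

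The main obstacle will be a clean justification that the ``pure $w$'' summand in the expansion of each $B_{\alpha}$ really vanishes; formally this amounts to the elementary fact that the free Lie algebra on a single degree-zero generator is abelian, so any iterated bracket of length $\geq 2$ in $w$ alone is zero. Once this is in hand, the bracket-length inequality is routine and completes the proof.
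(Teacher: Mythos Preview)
Your argument is correct and is precisely the fleshing-out that the paper leaves implicit: the paper gives no proof beyond the sentence ``follows from the Dynkin formula for the BCH-product, see \cite{Serre}''. Your decomposition $w_i=w+u_i$ and the observation that every length-$\geq 2$ Lie word in the single degree-zero element $w$ vanishes is exactly the mechanism that one-line remark is pointing to.
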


\medskip
Given a cdgl $(L,d)$,  a \emph{Maurer-Cartan element} (in short a \MC-element) is an element $a\in L_{-1}$
such that
\begin{equation}\label{equa:MC}
da+\frac{1}{2}[a,a]=0.
\end{equation}
If $a$ is a \MC-element in a cdgl $(L,d)$, the derivation defined by
$$d_a x=\ad_a(x)+dx,\;x\in L,$$
is a differential.

\medskip
Denote by $\MC(L)$ the set of \MC-elements of $L$. There is an action of the complete 
Lie algebra of elements of degree~0, $L_0$, on $\MC(L)$, called
\emph{gauge action}, and defined as follows:
\begin{equation}\label{equa:gauge}
x\cG a=e^{\ad_x}(a)-\frac{e^{\ad_x}-1}{\ad_x}(dx)=
\sum_{i\geq 0}\frac{\ad^i_x(a)}{i!} -\sum_{i\geq 0}\frac{\ad^i_x(dx)}{(i+1)!},
\end{equation}
with $x\in L_0$, $a\in \MC(L)$, see \cite[Proposition 1.2]{four2}. 
If $L=\Ll(V)$, the linear  and the quadratic parts 
of the gauge action can be determined explicitly  as,
\begin{eqnarray}
(x\cG a)_1&=&a_1-(dx)_1, \label{equa:lineargauge}\\
(x\cG a)_2&=&
a_2+[x_1,a_1]-\frac{1}{2}[x_1,d_1x_1]-d_2x_1-d_1x_2, \label{equa:quadgauge}
\end{eqnarray}
where the decompositions along the bracket length are denoted
$a=\sum_{i\geq 1} a_i$, $x=\sum_{i\geq 1}x_i$,
$d=\sum_{i\geq 1}d_i$ with $d_i(V)\subset \L^i(V)$.

\medskip
The  gauge action is compatible with the BCH-product in $L_0$: for any $x,y\in L_0$ and $a\in \MC(L)$, we have
\begin{equation}\label{equa:gaugebch}
x\cG y\cG a=(x\ast y)\cG a.
\end{equation}
(In the context of model of the interval, this property is a consequence of \cite[Theorem 2]{LS} but it also
appears  in  different works, see \cite[Appendix~B]{MR2876262} for instance.)
We denote by $\widetilde{\MC}(L)$ the set of equivalence classes for the gauge action.

\vspace{5mm}
In \cite{LS},
R. Lawrence and D. Sullivan  have defined a Lie model $\lasu_1$ of  {the interval} $\Delta^1$:
$$\lasu_{1}=(\Ll(a,b,x),d)$$ with $|a|=|b|=-1$, $|x|=0$, 
$da=-\frac{1}{2} [a,a]$, $db=-\frac{1}{2}[b,b]$ and
\begin{eqnarray}
dx&=&\ad_{x}a+\frac{\ad_{-x}}{e^{-\ad_{x}}-1}(b-a)=\frac{\ad_{x}}{1-e^{\ad_{x}}}a + \frac{\ad_{x}}{1-e^{-\ad_{x}}}b
\label{equa:diffinterval}
\\
&=&
\ad_{x}b+\frac{\ad_{x}}{e^{\ad_{x}}-1}(b-a).\label{equa:diffintervalb}
\end{eqnarray}
The cdgl $\lasu_1$ is called the LS-interval.
The elements $a$ and $b$ are \MC-elements.
The set of \MC-elements of $\lasu_1$ has been characterized in \cite{four2}
and the unicity of such model is established in \cite{PPTD}. 
As it was observed by Lawrence and Sullivan, an element $x$ of degree~0 in a cdgl $(\Ll(V),d)$
defines a flow  by
$$\frac{dv}{dt}=d x-\ad_x(v), \;|v|=-1.$$
The flowing by $x$ from $a$ to $b$, in time 1, corresponds to 
\begin{equation}\label{equa:flot}
(-x)\cG a=b
\end{equation}
 which determines
the  expression (\ref{equa:diffinterval}) of the differential of $x$.  
Such $x$ is called a path from $a$ to $b$.
The differential $dx$ 
is clearly stable by the symmetry 
($\tau\colon a \leftrightarrow b$, $\tau x=-x$)
which generates the symmetric group $\Sigma_2$.
Mention also the existence
of a symmetric model of a 2-gon in \cite{Gad}, where the geometrical action of $\Sigma_2$ 
is reflected on the differential of its model. 

\medskip
The next proposition recalls the well known    relation between $d_a$ and $d_b$ in $\lasu_1$. 
We give the proof for completeness.

\begin{proposition}\label{prop:expdiff}
Let $(L,d)$ be a cdgl containing a LS-interval $(\Ll(a,b,x),d)$. Then, for any $w\in L$, 
we have
$$d_ae^{\ad_x}(w)=e^{\ad_x}(d_b w).$$
\end{proposition}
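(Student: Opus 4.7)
The plan is to expand $d_a=d+\ad_a$ and $d_b=d+\ad_b$ on both sides and exploit that, since $|x|=0$, the map $e^{\ad_x}$ is a Lie algebra automorphism. Distributing brackets and applying $e^{\ad_x}[b,w]=[e^{\ad_x}(b),e^{\ad_x}(w)]$, the identity to prove becomes equivalent to
$$de^{\ad_x}(w)-e^{\ad_x}(dw)=\bigl[e^{\ad_x}(b)-a,\,e^{\ad_x}(w)\bigr].$$
So the proposition reduces to identifying the defect of $e^{\ad_x}$ as a chain map for $d$ with an inner derivation by the specific element $e^{\ad_x}(b)-a$.

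Next I would prove the general commutator formula
$$de^{\ad_x}(w)-e^{\ad_x}(dw)=\Bigl[\tfrac{e^{\ad_x}-1}{\ad_x}(dx),\,e^{\ad_x}(w)\Bigr].$$
The starting point is $[d,\ad_x]=\ad_{dx}$, which follows from the graded Leibniz rule applied to $d[x,-]$. Introducing the one-parameter family $G(s)=de^{s\ad_x}-e^{s\ad_x}d$, differentiation yields $G'(s)=\ad_x\,G(s)+\ad_{dx}\,e^{s\ad_x}$, and solving this linear ODE with $G(0)=0$ gives $G(s)=e^{s\ad_x}\int_0^s\ad_{e^{-t\ad_x}(dx)}\,dt$. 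Combining this with $e^{\ad_x}\ad_y=\ad_{e^{\ad_x}(y)}\,e^{\ad_x}$ and changing variable $u=1-t$, the evaluation at $s=1$ produces precisely $\frac{e^{\ad_x}-1}{\ad_x}(dx)$ as the bracketing factor.

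The last step is to identify $\tfrac{e^{\ad_x}-1}{\ad_x}(dx)$ with $e^{\ad_x}(b)-a$. Substituting the Lawrence--Sullivan expression (\ref{equa:diffintervalb}) for $dx$ and using that $\tfrac{e^{\ad_x}-1}{\ad_x}$ and $\tfrac{\ad_x}{e^{\ad_x}-1}$ are formal inverses,
$$\tfrac{e^{\ad_x}-1}{\ad_x}(dx)=(e^{\ad_x}-1)(b)+(b-a)=e^{\ad_x}(b)-a.$$
Conceptually, this same identity may be read off by applying $e^{\ad_x}$ to the gauge equation $(-x)\cG a=b$ of (\ref{equa:flot}), so the whole proposition ultimately encodes the fact that the flow from $a$ to $b$ intertwines the two twisted differentials.

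The only real technical obstacle is the commutator formula in the middle step; it is classical, but the signs, the direction of conjugation, and the change of variable in the integral have to be tracked carefully. Once that formula is in hand, the initial reduction and the final substitution are routine manipulations with the explicit formula for $dx$.
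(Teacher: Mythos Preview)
Your proof is correct and follows essentially the same approach as the paper: both reduce the statement to the commutator formula for $[d,e^{\ad_x}]$ and then substitute the Lawrence--Sullivan expression for $dx$. The only difference is presentational: the paper quotes this commutator formula directly as \cite[Lemma~1]{LS} (phrased there for $d_b$ and $e^{-\ad_x}$) and simplifies in one line, whereas you re-derive the formula via the ODE for $G(s)=de^{s\ad_x}-e^{s\ad_x}d$ and separate out the $\ad_a,\ad_b$ pieces first.
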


\begin{proof}
From \cite[Lemma~1]{LS}, we have
$$
d_{b}(e^{-\ad_{x}}w)=
e^{-\ad_{x}}(d_{b}w)+
(-1)^{|w|} e^{-\ad_{x}}\ad_{w}\frac{e^{-\ad_{x}}-1}{\ad_{x}}(d_{b}x).
$$
Replacing $d_{b}x$ by its value (\ref{equa:diffintervalb}) gives
\begin{eqnarray*}
d_{b}(e^{-\ad_{x}}w)
&=&
e^{-\ad_{x}}\left(d_{b}w+(-1)^{|w|}\ad_{w}(b-a)\right)\\
&=&
e^{-\ad_{x}}(d_{a}w).
\end{eqnarray*}
\end{proof}
%%%%%%%%%%%%%%%%%%%%%%%%%
\section{Lie model of a graph  consisting of a circuit}\label{sec:sigmancdgl}
 
 In this section we consider cdgl's endowed with the action of a group $G$.
 
\begin{definition}\label{def:actionlie}
A $G$-cdgl is a cdgl $(L,d)$, with a group action $G\times L\to L$, compatible with the differential and the Lie bracket; i.e.,
for any $v,w \in L$ and $\varphi\in G$, we have
$$\varphi[v,w]=[\varphi(v),\varphi(w)] \text{ and } \varphi(dv)=d(\varphi(v)).$$
\end{definition}

 Let $\Gamma$ be a graph consisting of a circuit with $k$ vertices. 
 Then $\Gamma$ is the union of $k$ intervals and its Lie model is
 a concatenation of LS-intervals (\cite{LS}); i.e.,
$$\lasu(\Gamma)=(\Ll(\oplus_{i=1}^{k}( \Q \tv_i\oplus  \Q \tx_i),d)\,,$$ 
where the $\tv_i$ are \MC-elements and the sub dgl's $(\Ll (\tv_i, \tv_{i+1}, \tx_i),d)$ are LS-intervals. Here 
by convention, we set $\tv_{k+1}=\tv_1$,  $\tx_{k+1}=\tx_1$. 
 
 The group $G$ of automorphisms of $\Gamma$ is the dihedral group $D_{2k}$ 
 generated by the cyclic permutation $\sigma$ and the involution $\tau$ on the first vertex,
 $$G= <\sigma, \tau; \sigma^k, \tau^2, \sigma\tau = \tau \sigma^{-1}>\,.$$
The cdgl $\lasu(\Gamma)$ is a $G$-cdgl. The actions of $\sigma$ and $\tau$ are defined by:
$$\sigma (\tx_i) = \tx_{i+1}\,, \hspace{5mm} \sigma (\tv_i)= \tv_{i+1}\,, \hspace{5mm} 
  \tau (\tv_i)= \tv_{k-i+2}\,, \hspace{5mm}\tau(\tx_i) = -\tx_{k-i+1}\,.$$
 Denote by $V$ the graded vector space generated by the $\tx_i$ and the $\tv_i$. 
 We decompose the differential $d$ in $\lasu(\Gamma)$ in the form 
 $d= \sum_{i\geq 1} d_i$ with $d_i(V)\subset  \mathbb L^i(V)$. 

 \begin{lemma}\label{lem:boucle}
With the above notation, the following properties are verified.
\begin{enumerate} 
\item[(i)] Any $d_1$-cycle of degree 0 in $\Ll^{\geq 2}(V)$ is equal to zero.
\item[(ii)] An element $w$ of degree zero is such that
$w\cG \tv_1=\tv_1$ if  and only if 
$w=\lambda(\tx_1\ast\tx_2\ast \dots \ast\tx_k)$
for some rational number $\lambda$.
\end{enumerate}
\end{lemma}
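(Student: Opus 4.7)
My plan is to deduce (ii) from (i), the latter being the substantive homological core.

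For (i), since $|\tx_i|=0$ and $|\tv_i|=-1$, any degree-zero element of $\Ll(V)$ involves only the $\tx_i$'s, so the claim reduces to injectivity of $d_1$ on $\Ll^{\geq 2}(W)$ with $W=\bigoplus_i\Q\tx_i$. Expanding the Lawrence--Sullivan formula (\ref{equa:diffinterval}) produces the linear differential $d_1\tv_i=0$ and $d_1\tx_i=\tv_{i+1}-\tv_i$, extended as a derivation. Direct inspection shows $(V,d_1)$ has homology of dimension one in each of degrees $0$ and $-1$, with $H_0$ generated by $\sum_i\tx_i$ and $H_{-1}$ by (the class of) $\tv_1$.

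I would then invoke the classical identification $H(\Ll(V),d_1)\cong\Ll(H(V,d_1))$, obtained from the K\"unneth theorem applied in the tensor algebra $T(V)=U(\Ll(V))$ combined with PBW over~$\Q$; alternatively, choose a splitting $V=Z\oplus A\oplus d_1A$ with $Z=\ker d_1$ and use an explicit contracting homotopy to retract $\Ll(V)$ onto $\Ll(Z)$. Since $\Ll(H(V,d_1))_0$ is the free graded Lie algebra on one degree-zero generator $\alpha$, graded symmetry $[\alpha,\alpha]=0$ forces it to be one-dimensional. As $V$ carries no generator in degree $+1$, $H(\Ll(V),d_1)_0$ equals $\ker(d_1)_0$ itself, and since $\sum_i\tx_i\in V\subset\L^1(V)$ already exhausts this one-dimensional space, $\ker d_1\cap\Ll^{\geq 2}(V)=0$.

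For the ``if'' direction of (ii), set $\omega=\tx_1\ast\tx_2\ast\dots\ast\tx_k$. Each $\tx_i$ is a path from $\tv_i$ to $\tv_{i+1}$ in the sense of (\ref{equa:flot}), giving $(-\tx_i)\cG\tv_i=\tv_{i+1}$; combined with (\ref{equa:gaugebch}) this reads $\tx_i\cG\tv_{i+1}=\tv_i$. Iterating from right to left using $\tv_{k+1}=\tv_1$ yields
\[
\omega\cG\tv_1=\tx_1\cG\tx_2\cG\dots\cG\tx_k\cG\tv_1=\tx_1\cG\dots\cG\tx_{k-1}\cG\tv_k=\dots=\tv_1.
\]
For general rational $\lambda$, the BCH property $(r\omega)\ast(s\omega)=(r+s)\omega$ together with (\ref{equa:gaugebch}) gives the assertion for $\lambda\in\Z$; moreover (\ref{equa:gauge}) evaluated at $\lambda\omega$ produces, in each fixed bracket length, a polynomial in $\lambda$ with only finitely many nonzero terms, so agreement on $\Z$ propagates to $\Q$.

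For the ``only if'' direction, suppose $w\cG\tv_1=\tv_1$. The linear formula (\ref{equa:lineargauge}) forces $d_1w_1=0$, hence $w_1=\lambda\sum_i\tx_i$ for some $\lambda\in\Q$ by the kernel description above. Since $\omega_1=\sum_i\tx_i$ by BCH, the replacement $w':=w\ast(-\lambda\omega)$ still satisfies $w'\cG\tv_1=\tv_1$ but now has $w'_1=0$; so $w'\in\Ll^{\geq 2}(V)$. I would then induct on $n\geq 2$: if $w'\in\Ll^{\geq n}(V)$, every term of (\ref{equa:gauge}) other than $-d_1w'_n$ lies in $\Ll^{\geq n+1}(V)$, so $(w'\cG\tv_1)_n=-d_1w'_n=0$, whence by (i) $w'_n=0$, i.e.\ $w'\in\Ll^{\geq n+1}(V)$. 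Completeness of $\Ll(V)$ then gives $w'=0$, hence $w=\lambda\omega$. The main obstacle is step (i); a direct combinatorial verification of injectivity of $d_1$ on $\Ll^{\geq 2}(W)$ via a Hall-basis computation is feasible but substantially longer than the homological argument sketched above.
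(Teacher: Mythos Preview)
Your proof is correct and follows essentially the same route as the paper's: part (i) is obtained from the identification $H(\Ll(V),d_1)\cong\Ll(H(V,d_1))$ together with the absence of positive-degree elements, and part (ii) proceeds by peeling off the linear part via $\lambda\omega$ and then inducting on bracket length using (i). Your treatment of the case $\lambda\in\Q$ in the ``if'' direction (via the polynomial argument) is more explicit than the paper's one-line appeal to (\ref{equa:gaugebch}), but this is a matter of detail rather than strategy.
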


\begin{proof}
(i)  Since $H(\Ll(V),d_1)=\Ll(H(V,d_{1}))$, we are reduced to the determination of the homology of the
1-dimensional simplicial complex defined by the graph $\Gamma$.
We obtain,
$$H(\Ll(V),d_1)=\Ll(\tv_1,\tx_1+\tx_2+\dots + \tx_k),$$
and any  cycle of degree 0 having a zero linear part is a boundary.
Since $\Ll(V)$
has no element of strictly positive degree, this cycle must be 0.

(ii)
First, from 
(\ref{equa:gaugebch}),
we have $\lambda(\tx_1\ast\cdots \ast\tx_k)\cG\tv_1=\tv_1$.
Now, let $w$ such that $w\cG \tv_1=\tv_1$. Using (\ref{equa:lineargauge}), we get
$\tv_1-d_1w_1=\tv_1$ and $d_1w_1=0$. This implies the existence of $\lambda\in\Q$ such that
$w_1=\lambda(\sum_{i=1}^k \tx_i)$. The element $z=(-\lambda(\tx_1\ast\cdots \ast\tx_k))\ast w$
verifies $z\cG \tv_1=\tv_1$ and has a zero linear part.
From (\ref{equa:quadgauge}), we deduce that its quadratic part verifies $d_1z_2=0$. 
From (i),  this implies $z_2=0$. By induction, we have $z_i=0$ for all $i$ and 
$w=\lambda(\tx_1\ast\dots \ast\tx_k).$ \end{proof}

\begin{lemma}\label{lemme1}
With the previous notation, any
$w\in \Ll( \oplus_{i=1}^k \Q \tx_i)$  such that $\sigma(d_1 w)=d_1 w$ 
is invariant by $\sigma$; i.e., $\sigma(w)=w$.
\end{lemma}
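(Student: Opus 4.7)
The plan is to decompose $w$ along bracket length as $w=\sum_{n\geq 1}w_n$ with $w_n\in\Ll^n(\oplus_i\Q\tx_i)$, and to prove separately that each $w_n$ is fixed by $\sigma$. Since $\sigma$ permutes the generators of $V$ it preserves bracket length, while $d_i$ is the component of $d$ shifting bracket length by $i-1$; the equivariance $\sigma d=d\sigma$ therefore refines to $\sigma d_i=d_i\sigma$ for every $i$. In particular, the hypothesis $\sigma(d_1w)=d_1w$ gives $d_1(\sigma(w_n)-w_n)=0$ for each $n\geq 1$.

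For any $n\geq 2$ this already suffices: the element $\sigma(w_n)-w_n$ is a $d_1$-cycle of degree zero lying in $\Ll^{\geq 2}(V)$, so \lemref{lem:boucle}(i) forces it to vanish.

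It remains to treat the linear part $w_1$. Writing $w_1=\sum_{i=1}^k\lambda_i\tx_i$, I read off the linear contribution of the interval differential from (\ref{equa:diffinterval}): since $z/(1-e^z)$ has constant term $-1$ and $z/(1-e^{-z})$ has constant term $+1$, one gets $d_1\tx_i=\tv_{i+1}-\tv_i$, hence $d_1w_1=\sum_{i=1}^k(\lambda_{i-1}-\lambda_i)\tv_i$, with indices cyclic modulo $k$. Applying $\sigma$, which sends $\tv_i$ to $\tv_{i+1}$, the equality $\sigma(d_1w_1)=d_1w_1$ becomes $\lambda_{i-1}-\lambda_i=\lambda_{i-2}-\lambda_{i-1}$ for every $i\in\Z/k$. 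Therefore the successive differences $\lambda_i-\lambda_{i-1}$ are all equal to some constant $c$; summing around the cycle gives $kc=0$, so $c=0$ and all the $\lambda_i$ coincide. Thus $w_1=\lambda\sum_i\tx_i$ is $\sigma$-invariant, and combined with the previous paragraph this yields $\sigma(w)=w$.

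The only calculational ingredient is this $n=1$ case; all higher bracket-length components are disposed of formally by \lemref{lem:boucle}(i), so the main (and only) obstacle is identifying the linear part of $d\tx_i$ and solving the resulting cyclic difference equation on the $\lambda_i$.
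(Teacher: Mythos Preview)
Your argument is correct, and it takes a genuinely different route from the paper. The paper fixes a single bracket length $\ell$, passes to the universal enveloping algebra $T(\oplus_i\Q\tx_i)$, writes $w=\sum_i\tx_i X_i$ uniquely, and compares the two expressions for $d_1w$ and $\sigma(d_1w)$ in terms of the independent elements $\nu_1,\ldots,\nu_{k-1}$; this yields that $\sigma X_{i-1}-X_i$ is independent of $i$, and then the fact that $\sigma(w)-w=(\sum_i\tx_i)E$ must be primitive forces $E=0$. In particular the paper does \emph{not} invoke \lemref{lem:boucle}(i). Your approach is more economical: once you observe that $\sigma$ and $d_1$ both preserve bracket length and commute, the vanishing of $\sigma(w_n)-w_n$ for $n\geq 2$ is an immediate corollary of \lemref{lem:boucle}(i), and only the linear piece $w_1$ requires any computation, which you carry out cleanly. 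The trade-off is that your proof depends on the homology computation behind \lemref{lem:boucle}(i), whereas the paper's enveloping-algebra argument is self-contained and works uniformly over all bracket lengths without separating off $n=1$.
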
 

\begin{proof}
 Since $\sigma$ preserves the bracket length, we can suppose  that $w\in\L^{\ell}(\oplus_{i=1}^k \Q \tx_i)$ for some ${\ell}\geq 1$. In the
enveloping algebra, $T(\oplus_{i=1}^k \Q \tx_i)$, $w$ can be  written in an unique way as
$$w=\sum_{i=1}^{k} \tx_iX_i \text{ with } X_i\in T^{{\ell}-1}(\oplus_{i=1}^{k} \Q \tx_i).$$
Denoting $\nu_{i} = d_1\tx_i=\tv_{i+1}-\tv_i$, we have
$\sum_{i=1}^k \nu_i = 0$ and
\begin{eqnarray*}
d_1w&=& \sum_{i=1}^{k-1}\nu_iX_i-(\sum_{i=1}^{k-1}\nu_i)X_{k}+
\sum_{i=1}^{k}\tx_i\,d_1X_i
=
\sum_{i=1}^{k-1}\nu_i(X_i-X_{k})+
\sum_{i=1}^{k}\tx_i\,d_1X_i,\\
\sigma(d_1w)&=&
\sum_{i=1}^{k-1}\nu_{i}(\sigma X_{i-1}-\sigma X_{k-1}) 
+\sum_{i=1}^{k}\tx_{i+1}\sigma(d_1X_i).
\end{eqnarray*}
The hypothesis $\sigma(d_1 w)=d_1 w$ implies
$$
X_i-X_k=\sigma X_{i-1}-\sigma X_{k-1}.
$$
Thus, the element
$\sigma X_{i-1}-X_i$ does not depend on $i$. Denoting by
$E=\sigma X_{i-1}-X_i$ this common value, we have
$\sigma X_i=X_{i+1}+E$ which implies
$$\sigma(w)=\sum_{i=1}^{k}\tx_{i+1}\sigma X_i=
\sum_{i=1}^{k}\tx_{i+1}X_{i+1}+(\sum_{i=1}^{k}\tx_{i+1})E=w+(\sum_{i=1}^{k}\tx_{i})E.$$
Since $w$ and $\sigma(w)$ are Lie elements, the vector
$(\sum_{i=1}^{k}\tx_{i})E$ must be primitive. Therefore, we have  $E=0$
and $\sigma(w)=w$.
\end{proof}

\begin{theoremb}\label{prop:actionlinear}
 The cdgl $\lasu(\Gamma)$ contains a \MC-element invariant by the actions of $\sigma$ and~$\tau$.
\end{theoremb}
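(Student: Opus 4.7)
The plan is to look for a Maurer--Cartan element of the form $a = z \cG \tv_1$, where $z$ is a degree-zero element to be constructed in the Lie subalgebra $\Ll(\oplus_{i=1}^k \Q \tx_i) \subset \lasu(\Gamma)$; as a gauge transform of the \MC-element $\tv_1$, such an $a$ is automatically \MC. Using $\tau(\tv_1) = \tv_{k+1} = \tv_1$ and $\sigma(\tv_1) = \tv_2 = (-\tx_1) \cG \tv_1$, the invariance conditions $\tau(a) = a$ and $\sigma(a) = a$ translate via \lemref{lem:boucle}(ii) into the system
$$
(-z) \ast \tau(z) = \mu X,
\qquad
(-z) \ast \sigma(z) \ast (-\tx_1) = \lambda X,
$$
for some scalars $\mu, \lambda \in \Q$, where $X = \tx_1 \ast \cdots \ast \tx_k$.

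Writing $z = \sum_{n \geq 1} z_n$ with $z_n \in \Ll^n(\oplus_i \Q \tx_i)$, we build the $z_n$ inductively, aiming for both equations to hold modulo $\Ll^{>n+1}$ after step $n$. For $n=1$, a direct linear computation gives
$$
z_1 = \sum_{j=1}^{k} \frac{2j - k - 1}{2k}\, \tx_j, \qquad \mu = 0, \qquad \lambda = -\frac{1}{k};
$$
this $z_1$ is $\tau$-invariant. Fixing $\mu = 0$, the first equation reduces to $\tau(z) = z$, a property we preserve throughout the induction.

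For the inductive step, assume $z^{(n-1)} := z_1 + \cdots + z_{n-1}$ is $\tau$-invariant and satisfies both equations modulo $\Ll^{>n}$. A BCH expansion of the second equation at bracket length $n+1$ reduces to a linear condition $\sigma(z_n) - z_n = \Omega_n$, with $\Omega_n \in \Ll^n(\oplus_i \Q \tx_i)$ explicitly determined by $z^{(n-1)}$. Solvability is equivalent to the vanishing of the $\sigma$-average $\tfrac{1}{k}\sum_{i=0}^{k-1} \sigma^i(\Omega_n)$. Applying $d_1$, using that it commutes with $\sigma$ and the identities $\sigma(X) = (-\tx_1) \ast X \ast \tx_1$ and $\tau(X) = -X$ together with the inductive hypothesis, one verifies that $\tfrac{1}{k}\sum_i \sigma^i(d_1 \Omega_n) = 0$; the $\sigma$-average of $\Omega_n$ is therefore a $d_1$-cycle of degree zero in $\Ll^{\geq 2}(V)$ and vanishes by \lemref{lem:boucle}(i). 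A solution $z_n$ thus exists, and averaging over $\langle\tau\rangle$ yields a $\tau$-invariant choice; the compatibility of this $\tau$-averaging with the $\sigma$-equation is ensured by \lemref{lemme1}, which controls the $\sigma$-symmetry of elements of $\Ll(\oplus_i \Q \tx_i)$ via their $d_1$-images.

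The hardest part is orchestrating the inductive step: explicitly identifying $\Omega_n$ by a careful BCH computation, verifying that the $d_1$-image of its $\sigma$-average vanishes, and ensuring that the $\tau$-averaging preserves the $\sigma$-equation. The two lemmas immediately preceding the theorem---\lemref{lem:boucle}(i) killing $\sigma$-averaged degree-zero $d_1$-cycles, and \lemref{lemme1} detecting $\sigma$-symmetry through $d_1$-images---are tailored precisely to resolve these obstructions within the Lie subalgebra $\Ll(\oplus_i \Q \tx_i)$.
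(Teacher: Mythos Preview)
Your overall strategy---writing the invariant \MC-element as $z\cG\tv_1$ with $z\in\Ll(\oplus_i\Q\tx_i)$, translating the invariance conditions via \lemref{lem:boucle}(ii), and building $z$ inductively on bracket length---is sound and is essentially the paper's approach for the $\sigma$-part. The paper, however, does \emph{not} try to enforce $\tau$-invariance simultaneously: it first constructs a purely $\sigma$-invariant \MC-element $P$ by the inductive argument you outline, and only afterwards passes to a $\tau$-invariant element by the single global correction $\Omega=(\alpha/2)\cG P$, where $\alpha=(-\tau\tw)\ast\tw$ for any $\tw$ with $\tw\cG P=\tv_1$. The dihedral relation $\sigma\tau=\tau\sigma^{-1}$ makes $\tau P$ automatically $\sigma$-invariant, and one then checks (again inductively, via \lemref{lem:boucle}(i)) that $\alpha$ itself is $\sigma$-invariant; both invariances of $\Omega$ follow.

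The gap in your argument is precisely the $\tau$-averaging step. If $z_n$ solves $(\sigma-1)z_n=\Omega_n$, then $(\sigma-1)(\tau z_n)=-\tau\sigma^{-1}\Omega_n$, so the average $\tfrac12(z_n+\tau z_n)$ solves the same equation only if $\tau\sigma^{-1}\Omega_n=-\Omega_n$. You have not established this identity, and it does not follow from \lemref{lemme1} in any direct way; in fact, already the $(\lambda X)_n$ part of $\Omega_n$ fails to satisfy it, since $\tau\sigma^{-1}X=-\sigma X$ and $X_n$ is not $\sigma$-invariant in general. Equivalently, one needs $\tau z_n-z_n$ to be $\sigma$-invariant so that it can be absorbed by a $\sigma$-invariant correction, but this is exactly what is in question. (There is also a harmless index slip: the condition on $z_n$ appears at bracket length $n$, not $n+1$.) The paper's two-stage construction sidesteps this obstruction entirely; if you wish to keep the simultaneous approach, you must either prove the identity $\tau\sigma^{-1}\Omega_n=-\Omega_n$ directly from the inductive hypothesis, or reorganize the induction so that the $\tau$-symmetry is imposed first and the $\sigma$-defect is then killed---which is closer to what the paper does in reverse order.
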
 

\begin{proof} The first step of the proof consists in a construction by induction of a \MC-element stable by the action of $\sigma$.
  We define
\begin{equation}\label{equa:depart}
P[1]= -\left(\, \sum_{i=1}^{k-1} \frac{k-i}{k} \tx_i\,\right)\, \cG\tv_1.
\end{equation}
Using the property (\ref{equa:lineargauge}) of the gauge action, 
one has the linear part $P[1]_1$ of $P[1]$:
$$
P[1]_1 =\tv_1+\sum_{i=1}^{k-1}\frac{k-i}{k}d_1\tx_i
=
\frac{1}{k}\sum_{i=1}^{k} \tv_i.$$
Therefore, $P[1]$ is a \MC-element whose linear part is $\sigma$-invariant.
By induction we suppose to have constructed  the \MC-elements, $P[i]$, for $i=1,\dots,n-1$, such that
\begin{enumerate} 
\item[(a)] $P[i]-P[i-1]\in \L^{\geq i}(V)$,
\item[(b)] $\sigma P[i]_j=P[i]_j$, for any $j\leq i$, where $P[i]_j$ denotes the component of $P[i]$ 
in the Lie brackets of length $j$,
\end{enumerate}
As the graph $\Gamma$ is connected, from \cite[Theorem]{four4} there is only one non-trivial
equivalence class of \MC-elements. 
%.
Thus, there is an element of degree 0, $T$, satisfying the equation
\begin{equation}\label{equa:letn}
 T \cG P[n-1] =\sigma P[n-1] .
\end{equation}
Formula (\ref{equa:lineargauge})
gives the following equality between the linear parts,
$$P[n-1]_1-d_1T_1=\sigma P[n-1]_1.
$$
Since  $P[n-1]_1= \sigma P[n-1]_1$, 
 $d_1 T_1=0$ and $T_1$ is
$\sigma$-invariant. We prove by induction on $j$  that $T_j$ is $\sigma$-invariant for $j<n$. We thus suppose $T_1, \dots , T_{r-1}$ to be $\sigma$-invariant. Then from $T\cG P[n-1]= \sigma P[n-1]$, we get that $d_1T_r$ is $\sigma$-invariant, and by Lemma \ref{lemme1}, $T_r$ is $\sigma$-invariant.

The same construction in Lie brackets of length $n$ gives  a
$\sigma$-invariant element, $S$, such that
\begin{equation}\label{equa:quadPnn}
P[n-1]_{n}+S-d_1T_{n}=\sigma P[n-1]_{n}.
\end{equation}
With   successive applications of  $\sigma$, 
we deduce the system of equations,
\begin{eqnarray*}
P[n-1]_{n}+S-d_1T_{n}
&=&
\sigma P[n-1]_{n},\\
\sigma P[n-1]_{n}+S-d_1\sigma T_{n}
&=&
\sigma^2 P[n-1]_{n},\\
\dots &=&\dots,\\
\sigma^{k-1}P[n-1]_{n}+S-d_1\sigma^{k-1}T_{n}
&=&
 P[n-1]_{n}.
\end{eqnarray*}
The adding of these equations gives
\begin{equation}\label{equa:quadPnnn}
k S=d_1 (T_{n} + \sigma T_{n} +\dots +\sigma^{k-1} T_{n} ).
\end{equation}
We define $T'$ and $P'$
by
$$T'_i = \left\{ \begin{array}{ll} T_i \,, & \text{for }i<n,\\
\frac{k-1}{k} T_n + \frac{k-2}{k}\sigma T_n + \dots + \frac{1}{k}\sigma^{k-2}T_n \hspace{5mm}\mbox{} & \text{for }i= n,\\
0 & \text{for }i>n,\end{array}\right.$$
and $$P'= T'\cG P[n-1]\,.$$
By construction,
 $P'_{<n}= P[n-1]_{<n}$.
 %%%%%%%%%%%%%%%%%%
 %%%%%%%%%%%%%%%%%%%
 On the other hand,
\begin{equation}\label{equa:defPn}
P'_{n}=
P[n-1]_{n} + S' - \sum_{i=0}^{k-2}\frac{k-i-1}{k} d_1\sigma^i T_{n},
\end{equation}
with $S'$ invariant by $\sigma$.
We apply $\sigma$ to this equality and obtain:
$$
\sigma P'_{n}
=
\sigma P[n-1]_{n} +S'-\sum_{i=0}^{k-2}\frac{k-i-1}{k}d_1\sigma^{i+1}T_{n}.$$
Using (\ref{equa:quadPnn}) and extracting the value of $\sigma^{k-1} T_{n}$ 
of (\ref{equa:quadPnnn}), we have
\begin{eqnarray*}
\sigma P'_{n}&=&
P[n-1]_{n}+S-d_1T_{n} +S'-\sum_{i=0}^{k-3}\frac{k-i-1}{k}d_1\sigma^{i+1}T_{n} \\
&&
\hskip 1cm
-\frac{1}{k}\left(
kS-\sum_{i=0}^{k-2}d_1\sigma^iT_{n}\right).
\end{eqnarray*}
With a reordering of the indices, this formula becomes
\begin{eqnarray*}
\sigma P'_{n}&=&
P[n-1]_{n}+S'-\sum_{j=0}^{k-2}\frac{k-j-1}{k}d_1\sigma^{j}T_{n}\\
&=&
P'_{n}.
\end{eqnarray*}
 We write $P[n]= P'$ and the inductive step is closed. Now by property (a) of the induction, 
 the sequence $P[n]$ converges to a \MC-element $P$ with $\sigma P= P$. 

\medskip
 We now use $P$ to construct a \MC-element $\Omega$ invariant by $\sigma$ and  $\tau$. 
   Since $\sigma \tau = \tau \sigma^{k-1}$,   we  deduce that $\tau P$ is also $\sigma$-invariant.
On the other hand, since $P$ and $\tv_1$ are gauge equivalent, there is an  element $\tw$  such that
$\tw\cG P=\tv_1$ and we set
$$\alpha=(-\tau \tw)\ast  \tw;$$
i.e., $\alpha$ can be represented as a composition of paths,
$$\xymatrix{
P && \tau P\ar[ll]_\alpha \\
&\tv_1= \tau \tv_1 \ar[ul]^{\tw} \ar[ur]_{\tau \tw}.&
}$$
From  (\ref{equa:flot})
and from Property (iii) of the BCH-product, recalled in \secref{sec:back}, we get
\begin{eqnarray}
\alpha\cG P
&=& \tau P,\\
\tau\alpha
&=&
(- \tw)\ast \tau  \tw=-((-\tau  \tw)\ast  \tw)\nonumber=
-\alpha. \label{equa:alphaequi}
\end{eqnarray}
We define a \MC-element $\Omega$ by 
\begin{equation}\label{equa:centregravite}
\Omega=\left(\frac{\alpha}{2}\right)\cG P
\end{equation}
and deduce that
$$\tau\Omega=\left(-\frac{\alpha}{2}\right)\cG\tau P=\frac{\alpha}{2}\cG(-\alpha)\cG\tau P=
\frac{\alpha}{2}\cG P=\Omega.$$
Thus the \MC-element $\Omega$ is $\tau$-invariant. We prove now that $\alpha$ is $\sigma$-invariant
which implies the $\sigma$-invariance of $\Omega$.

\smallskip
 The 
linear part of  $\alpha\cG P=\tau P$ is
$P_1-d_1\alpha_1=\tau P_1.$
This gives the $\sigma$-invariance of $d_1\alpha_1$ and by \lemref{lem:boucle}(i) the $\sigma$-invariance of
$\alpha_1$. 
Suppose that   the components $\alpha_j$ of $\alpha$,
in the Lie brackets of length $j$, are $\sigma$-invariant for any $j\leq n$. The component  of the equality
$\alpha\cG P=\tau P$ in the Lie bracket of length $(n+1)$ gives
$$P_{n+1}+S-d_1\alpha_{n+1}=(\tau P)_{n+1},$$
where  $S$ is $\sigma$-invariant by induction.  
This implies the $\sigma$-invariance of $d_1\alpha_{n+1}$ and 
therefore of $\alpha_{n+1}$ as required.
 \end{proof}

%%%%%%%%%%%%%%%%%%%%%%%
\section{Symmetric Lie model of a  triangle}\label{sec:triangle}

Let $\Delta^2$ be the simplicial complex of the triangle, with 3 vertices and 3 edges. In
the spirit of the model of the interval (\cite{LS}), we are looking for a model of the triangle
as a cdgl whose generators are the simplices of $\Delta^2$, whose linear part of the differential
is the differential of the associated simplicial complex and whose the sub-cdgl's corresponding to edges are
LS-intervals.
We call it \emph{a Lie model of $\Delta^2$.}
The next result proves existence and unicity of such model.

\begin{proposition}\label{prop:unicitylasu2}
Let $\Delta^2$ be the simplicial complex of the triangle.
\begin{enumerate}[1)]
\item
A Lie model of  $\Delta^2$ is  the cdgl
$$
\lasu_2=(\Ll(\tv_{1},\tv_{2},\tv_{3},\tx_1,\tx_{2},\tx_{3},\te),d),$$
with $\vert \tv_i\vert = -1$, $\vert \tx_i\vert = 0$ and $\vert  \te\vert = 1$.
The elements $\tv_i$ are \MC-elements, the intervals $[\tv_1, \tv_2, \tx_1]$, $[\tv_2, \tv_3, \tx_2]$ 
and $[\tv_3, \tv_1, \tx_3]$ are LS-intervals, and  
$$
 d\te = [\te,\tv_{1}]+
 \tx_{1}\ast \tx_{2}\ast \tx_{3}.
$$
\item
Let $\lasu_2$ be as before and
$(\Ll(\tv_{1},\tv_{2},\tv_{3},\tx_1,\tx_{2},\tx_{3},\te),d')$
be a cdgl such that 
\begin{enumerate}[(i)]
\item the differentials $d$ and $d'$ coincide on the generators $\tv_i$ and $\tx_i$,
\item the linear parts  $d_1\te$ and $d'_1\te$ coincide, 
\item $d'_{\tv_1}(\te)\in \Ll(\tx_1,\tx_2,\tx_3)$.
\end{enumerate}
Then, we have $d=d'$.
\end{enumerate}
\end{proposition}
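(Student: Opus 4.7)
My plan splits into existence (1) and uniqueness (2).

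For (1), the only non-trivial check is $d^2\te=0$; the rest follows from the MC property of the $\tv_i$ and the LS-interval structure of the three edges. I would pass to the perturbed differential $d_{\tv_1}=d+\ad_{\tv_1}$, which is a differential precisely when $d$ is, since $\tv_1$ is MC. Writing $P=\tx_1\ast\tx_2\ast\tx_3$, the proposed formula for $d\te$ combined with the MC equation of $\tv_1$ and graded antisymmetry simplifies to $d_{\tv_1}\te=P$; so $d^2\te=0$ reduces to $d_{\tv_1}P=0$. For this I use that each $\tx_i$ is the path of its LS-interval, so by (\ref{equa:flot}) combined with (\ref{equa:gaugebch}), $(-P)\cG\tv_1=\tv_1$: namely, $P$ is a loop at $\tv_1$. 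Substituting this identity into the explicit gauge action (\ref{equa:gauge}) and solving for $dP$ yields $dP=[P,\tv_1]$, and then $d_{\tv_1}P=[P,\tv_1]+[\tv_1,P]=0$ by the graded antisymmetry of the bracket of the even element $P$ with the odd element $\tv_1$.

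For (2), set $\Delta=d\te-d'\te$. Hypothesis (ii) gives $\Delta\in\Ll^{\geq 2}(V)$. From Part 1, $d_{\tv_1}\te=P\in\Ll(\tx_1,\tx_2,\tx_3)$, and by (iii), $d'_{\tv_1}\te\in\Ll(\tx_1,\tx_2,\tx_3)$; subtracting, $\Delta\in\Ll^{\geq 2}(\tx_1,\tx_2,\tx_3)$. Next, using $d^2\te=0=(d')^2\te$ together with the fact that $d$ and $d'$ coincide on the sub-cdgl $\Ll(\tv_i,\tx_i)$ by (i), a Leibniz expansion of $d'([\te,\tv_1])+d'P$ collapses, after a short calculation, to the identity $d'_{\tv_1}\Delta=0$. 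Taking the linear part yields $d_1\Delta=0$ in $\Ll(V')$, where $V'=\langle\tv_i,\tx_i\rangle$.

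The conclusion mirrors \lemref{lem:boucle}(i) applied to the 1-skeleton of the triangle. The chain complex $(V',d_1)$ is that of a 3-circuit, whose homology is concentrated in degrees $-1$ and $0$, with the degree-$0$ class spanned by $[\tx_1+\tx_2+\tx_3]$. Via $H(\Ll(V'),d_1)\cong\Ll(H(V',d_1))$, the degree-$0$ homology lies entirely in bracket length $1$, so every $d_1$-cycle of degree $0$ in $\Ll^{\geq 2}(V')$ must be a $d_1$-boundary; but $V'$ carries no positive-degree generator, so no such boundary exists and $\Delta=0$. Therefore $d=d'$. The most delicate step, I expect, is the sign bookkeeping that simplifies $d\te=[\te,\tv_1]+P$ to the clean identity $d_{\tv_1}\te=P$ and that establishes $d'_{\tv_1}\Delta=0$; once that is handled, the homological vanishing is routine.
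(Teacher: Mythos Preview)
Your Part~1 argument is essentially a repackaging of the paper's. The paper encodes the loop $P=\tx_1\ast\tx_2\ast\tx_3$ as the image of the LS-generator under a cdgl morphism $\varphi\colon\lasu_1\to\Ll(\tv_i,\tx_i)$ with $\varphi(a)=\varphi(b)=\tv_1$, $\varphi(x)=P$, and reads off $dP=\varphi(dx)=[P,\tv_1]$ from the LS formula; you instead solve $(-P)\cG\tv_1=\tv_1$ for $dP$ via the explicit gauge formula. These are two presentations of the same computation, and both reduce $d^2\te=0$ to $d_{\tv_1}P=0$.

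In Part~2 there is a genuine gap at the sentence ``Taking the linear part yields $d_1\Delta=0$''. From $d'_{\tv_1}\Delta=0$ you cannot conclude $d_1\Delta=0$ in one stroke. Decomposing $d'_{\tv_1}=\sum_{j\geq 1}d_j$ and $\Delta=\sum_{k\geq 2}\Delta_k$ along bracket length, the length-$n$ component of $d'_{\tv_1}\Delta=0$ reads
\[
d_1\Delta_n+\sum_{j\geq 2}d_j\Delta_{n+1-j}=0,
\]
and the higher terms have no reason to vanish a priori (each $d_j\tx_i$ involves the $\tv$'s, so they do not cancel for degree or support reasons). What you need is the obvious induction: the lowest nonzero length $m$ gives $d_1\Delta_m=0$, hence $\Delta_m=0$ by your homological vanishing, and you then iterate. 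This is exactly the induction the paper performs, stated there as ``$d_j=d'_j$ for $j<n$ implies $d_1(d_n\te-d'_n\te)=0$'', after which \lemref{lem:boucle}(i) forces $d_n\te=d'_n\te$. Once you insert this induction, your proof and the paper's coincide.
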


\begin{proof}
1) The proof is reduced to: ``$\tx_{1}\ast \tx_{2}\ast \tx_{3}$ is a $d_{\tv_{1}}$-cycle.''
Let $\lasu_{1}=(\Ll(a,b,x),d)$ be a LS-interval. The composition of morphisms as in \cite[Theorem 2]{LS}
gives a cdgl map
$$\varphi\colon \lasu_{1}\to (\Ll(\tv_{1},\tv_{2},\tv_{3},\tx_1,\tx_{2},\tx_{3}),d),$$
with $\varphi(a)=\varphi(b)=\tv_{1}$ and $\varphi(x)=\tx_{1}\ast\tx_{2}\ast\tx_{3}$. We deduce
\begin{eqnarray*}
d(\tx_{1}\ast\tx_{2}\ast\tx_{3})
&=&
d\varphi(x)=\varphi(dx)\\
&=&
\varphi\left(\ad_{x}b+\frac{\ad_{x}}{e^{\ad_{x}}-1}(b-a)
\right)\\
&=&
\varphi[x,b]=-\ad_{\tv_{1}}(\tx_{1}\ast\tx_{2}\ast\tx_{3}).
\end{eqnarray*}
This implies $d_{\tv_{1}}(\tx_{1}\ast\tx_{2}\ast\tx_{3})=0$ as expected.

\medskip
2) We decompose $d_{\tv_1}$ and $d'_{\tv_1}$ along the bracket size as
$d_{\tv_1}=\sum_{j\geq 1}d_j$
and
$d'_{\tv_1}=\sum_{j\geq 1}d'_j$. By hypothesis, we have $d_1=d'_1$.
Suppose, by induction, that $d_j=d'_j$ for any $j<n$. From $d^2={d'}^{2}=0$, we deduce
$$d_1d'_n=-\sum_{j=2}^n d_j d'_{n+1-j}=-\sum_{j=2}^nd_j d_{n+1-j}=d_1d_n.$$
Therefore, the element $d_n\te-d'_n\te$ is a $d_1$-cycle of degree 0 in $\Ll(\tx_1,\tx_2,\tx_3)$.
From the first assertion of \lemref{lem:boucle}, we deduce $d_n\te=d'_n\te$.
\end{proof}

Denote by $\Gamma$ the boundary of the triangle and by $\lasu({\Gamma})$ its Lie model.  
As the symmetric group $\Sigma_{3}$ coincides with the dihedral group $D_{6}$, we may consider the action of 
$\Sigma_{3}$ on $\lasu({\Gamma})$ described at the beginning of \secref{sec:sigmancdgl} and apply 
\thmref{prop:actionlinear}.

\begin{theoremb}\label{prop:center} Let $\Omega$ be a $\Sigma_3$-invariant \MC-element in $\lasu(\Gamma)$. Then 
the cdgl $\lasu_2$ is a $\Sigma_3$-cdgl isomorphic to the cdgl 
$(\lasu(\Gamma) \,\widehat{\coprod}\, \L (\te'),d)$   satisfying the following properties:
\begin{enumerate}[(i)]
\item  for some $\beta$ of degree 0, we have
 $$d\te'=-[\Omega,\te']+\beta\ast\tx_1\ast \tx_2\ast\tx_3\ast (-\beta),$$
\item
 $\sigma(\te')=\te'\text{ and }\tau(\te')=-\te'$.
\end{enumerate}
\end{theoremb}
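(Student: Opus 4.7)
The plan is to realize $\te'$ as a BCH-conjugate of $\te$, driven by a degree-0 element that interpolates between $\tv_1$ and $\Omega$. Since $\Gamma$ is connected, \cite{four4} asserts uniqueness of the non-trivial gauge class of MC-elements of $\lasu(\Gamma)$, so there exists $\beta\in\lasu(\Gamma)_0$ with $\beta\cG\tv_1=\Omega$. Equivalently, $-\beta$ plays the role of a path from $\tv_1$ to $\Omega$ in the sense of~(\ref{equa:flot}), and solving the gauge equation explicitly for $d\beta$ shows that this element of $\lasu(\Gamma)$ satisfies the Lawrence-Sullivan differential formula~(\ref{equa:diffintervalb}) with $a=\tv_1$, $b=\Omega$, $x=-\beta$.

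I then set $\te':=e^{\ad_\beta}(\te)$. Since this is a linear change of variable modulo higher bracket length, the family $\{\tv_i,\tx_i,\te'\}$ still freely generates $\lasu_2$, yielding the desired isomorphism $\lasu_2\cong \lasu(\Gamma)\,\widehat{\coprod}\,\L(\te')$ as complete graded Lie algebras. To compute $d\te'$, I apply \propref{prop:expdiff}: its proof only uses the identity~(\ref{equa:diffintervalb}) for $d_b x$, which is available here. Applied with $w=\te$, and using $d_{\tv_1}(\te)=\tx_1\ast\tx_2\ast\tx_3$ (from the formula for $d\te$ in \propref{prop:unicitylasu2}), together with BCH property~(v), this yields
$$d\te'=-[\Omega,\te']+\beta\ast\tx_1\ast\tx_2\ast\tx_3\ast(-\beta),$$
which is formula~(i).

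For~(ii), declaring $\sigma\te'=\te'$ and $\tau\te'=-\te'$ extends the $\Sigma_3$-action on $\lasu(\Gamma)$ to $\lasu_2$, and compatibility with $d$ amounts to checking
$$\sigma(\beta\ast Y\ast(-\beta))=\beta\ast Y\ast(-\beta)\quad\text{and}\quad \tau(\beta\ast Y\ast(-\beta))=-(\beta\ast Y\ast(-\beta)),$$
where $Y=\tx_1\ast\tx_2\ast\tx_3$. The main inputs are: by direct BCH computation, $\sigma Y=(-\tx_1)\ast Y\ast\tx_1$ (from $\sigma Y=\tx_2\ast\tx_3\ast\tx_1$) and $\tau Y=-Y$ (via iterated BCH property~(iii)); by \lemref{lem:boucle}(ii), any two elements sending $\tv_1$ to $\Omega$ under the gauge action differ on the right by a rational multiple of $Y$; and the LS-interval structure on $\lasu(\Gamma)$ gives $(-\tx_1)\cG\tv_1=\tv_2$. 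Since $\sigma\tv_1=\tv_2$, $\tau\tv_1=\tv_1$ and $\sigma\Omega=\tau\Omega=\Omega$, one deduces $\sigma\beta\ast(-\tx_1)=\beta\ast\mu Y$ and $\tau\beta=\beta\ast\lambda Y$ for some $\lambda,\mu\in\Q$, after which both identities reduce to the absorption $(\nu Y)\ast Y\ast(-\nu Y)=Y$ coming from BCH property~(ii).

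The main delicate point is the application of \propref{prop:expdiff}: the triple $(\tv_1,\Omega,-\beta)$ does not form a free LS-interval sub-cdgl of $\lasu_2$, but only satisfies the flow equation. One must revisit the proof of that proposition to verify it uses only identity~(\ref{equa:diffintervalb}), which is itself a formal consequence of $\beta\cG\tv_1=\Omega$ together with $\tv_1$ and $\Omega$ being MC. Once this is in place, the remaining $\Sigma_3$-bookkeeping is essentially mechanical, made possible by the key observation that the freedom in the choice of $\beta$ lies precisely along multiples of $Y$, which is exactly the direction in which the required invariance identities trivialize.
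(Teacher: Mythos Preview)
Your proof is correct and follows essentially the same route as the paper: define $\te'=e^{\ad_\beta}(\te)$ for $\beta$ gauging $\tv_1$ to $\Omega$, compute $d_\Omega\te'$ via \propref{prop:expdiff}, and then use \lemref{lem:boucle}(ii) to control $\sigma\beta$ and $\tau\beta$ up to a rational multiple of $Y=\tx_1\ast\tx_2\ast\tx_3$, which is exactly the ambiguity that cancels in the conjugation. The only difference is cosmetic: the paper applies \propref{prop:expdiff} directly with $(a,b,x)=(\Omega,\tv_1,\beta)$ rather than your $(\tv_1,\Omega,-\beta)$ followed by an inversion, and you are explicit about the point (tacit in the paper) that the argument of \propref{prop:expdiff} needs only the flow identity~(\ref{equa:diffintervalb}) and not a free LS-interval sub-cdgl.
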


\begin{proof} 
Let $\beta$ be a path from $\Omega$ to $\tv_{1}$ and set $\te'= e^{\ad_\beta}(\te)$.
From \propref{prop:expdiff}, we get
\begin{eqnarray*}
d_\Omega \te'
&=&
d_{\Omega}e^{\ad_{\beta}}(\te)=e^{\ad_{\beta}}(d_{\tv_{1}}\te)\\
&=&
 \beta \ast \tx_1\ast \tx_2\ast \tx_3\ast (-\beta).
\end{eqnarray*}
Let us observe that $\sigma (\beta) \ast (-\tx_{1})$ and $\beta$ are two paths from $\Omega$ to $\tv_1$. From
\lemref{lem:boucle}(ii) applied to 
$(-\beta)\ast \sigma (\beta) \ast (-\tx_{1})$, there exists  $\lambda\in\Q$ such that 
$$\sigma(\beta)=\beta\ast \lambda(\tx_{1}\ast \tx_{2}\ast \tx_{3})\ast \tx_{1}.$$
Replacing $\sigma(\beta)$ by its value in $\sigma(d_{\Omega}\te')$, we get 
\begin{eqnarray*}
\sigma d_\Omega (\te')
&=&
\sigma(\beta)*\tx_{2}*\tx_{3}*\tx_{1}*(-\sigma(\beta))\\
&=&
\beta* \lambda(\tx_{1}*\tx_{2}*\tx_{3})*\tx_{1}*\tx_{2}*\tx_{3}*\tx_{1}*(-\tx_{1})*(-\lambda)(\tx_{1}*\tx_{2}*\tx_{3})*(-\beta)\\
&=&
\beta*\tx_{1}*\tx_{2}*\tx_{3}*(-\beta)\\
&=&
d_\Omega (\te')
\end{eqnarray*}

\medskip
On the other hand, let us recall
$\tau(\tx_{2})=-\tx_{2}$, $\tau(\tx_{1})=-\tx_{3}$, $\tau(\tv_{1})=\tv_{1}$ and $\tau(\tv_{2})=\tv_{3}$. 
Applying
\lemref{lem:boucle}(ii)  to 
$(-\beta)\ast \tau (\beta)$, we obtain   $\mu\in\Q$ such that
$$\tau(\beta)=\beta\ast \mu(\tx_{1}\ast\tx_{2}\ast\tx_{3}).$$
Once again, a similar computation gives
\begin{eqnarray*}
\tau (d_\Omega (\te')) 
&=&
\beta*\mu(\tx_{1}\ast\tx_{2}\ast\tx_{3})*(-\tx_{3})*(-\tx_{2})*(-\tx_{1})*(-\mu)(\tx_{1}*\tx_{2}*\tx_{3})*(-\beta)\\
&=&
\beta*(-(\tx_{1}*\tx_{2}*\tx_{3}))*(-\beta)=
%\\
%%
%&=&
-d_\Omega (\te').
\end{eqnarray*}
\end{proof}

\noindent {\bf Problem.} 
In \thmref{prop:actionlinear}, we prove that the Lie model of a graph consisting of a circuit  contains a \MC-element 
invariant along the group of automorphisms of the graph. 
Is this result true in general: does the Lie model of any finite graph  have 
a \MC-element invariant along the automorphisms of the graph?

\providecommand{\bysame}{\leavevmode\hbox to3em{\hrulefill}\thinspace}
\providecommand{\MR}{\relax\ifhmode\unskip\space\fi MR }
% \MRhref is called by the amsart/book/proc definition of \MR.
\providecommand{\MRhref}[2]{%
  \href{http://www.ams.org/mathscinet-getitem?mr=#1}{#2}
}
\providecommand{\href}[2]{#2}

\end{document}